\theoremstyle{plain}
 \newtheorem{theorem}{Theorem}[section]
 \newtheorem{lem}{Lemma}[section]
\theoremstyle{Definition}
 \newtheorem{exm}{Example}[section]
 \newtheorem{dfn}{Definition}[section]
\theoremstyle{remark}
 \numberwithin{equation}{section}
\begin{document}
	\newcommand{\T}{\mathbb{T}}
	\newcommand{\R}{\mathbb{R}}
	\newcommand{\Q}{\mathbb{Q}}
	\newcommand{\N}{\mathbb{N}}
	\newcommand{\Z}{\mathbb{Z}}
	\newcommand{\tx}[1]{\quad\mbox{#1}\quad}
	%Please put here any further newcommands.
	%\hfill  123\\
	%\noindent {\tt The Nepal Math. Sc. Report\\
	%	Vol. 35, No.1 and 2, 2016\\}

\title [Escaping Set of Hyperbolic Semigroup]{Escaping Set of Hyperbolic Semigroup} 

\author[Bishnu Hari Subedi, Ajaya Singh]{Bishnu Hari Subedi $^1$ \and Ajaya Singh $^2$}
\address{ $^{1, \; 2}$Central Department of Mathematics, Institute of Science and Technology, Tribhuvan University, Kirtipur, Kathmandu, Nepal \\Email: subedi.abs@gmail.com, singh.ajaya1@gmail.com}

\vspace{-.2cm} 
\thanks{\hspace{-.5cm}\tt
	This research work of first author is supported by PhD faculty fellowship from University Grants Commission, Nepal. 
	\hfill }

\maketitle
\thispagestyle{empty}

{\footnotesize
\noindent{\bf Abstract:}  \textit{In this paper,  we mainly study hyperbolic semigroups from which we get non-empty escaping set and Eremenko's conjecture remains valid.  We prove that if each generator of bounded type transcendental semigroup $ S $ is  hyperbolic, then the semigroup is itself hyperbolic and all components of $ I(S) $ are unbounded}. 
 \\
\noindent{\bf Key Words}: Escaping set,   Eremenko's conjecture, transcendental semigroup, hyperbolic semigroup. \\
\bf AMS (MOS) [2010] Subject Classification.} {37F10, 30D05}

\section{Introduction}
Throughout this paper, we denote the \textit{complex plane} by $\mathbb{C}$ and set of integers greater than zero by $\mathbb{N}$. 
We assume the function $f:\mathbb{C}\rightarrow\mathbb{C}$ is \textit{transcendental entire function} (TEF) unless otherwise stated. 
For any $n\in\mathbb{N}, \;\; f^{n}$ always denotes the nth \textit{iterates} of $f$. Let $ f $ be a TEF. The set of the form
$$
I(f) = \{z\in \mathbb{C}:f^n(z)\rightarrow \infty \textrm{ as } n\rightarrow \infty \}
$$
is called an \textit{escaping set} and any point $ z \in I(S) $ is called \textit{escaping point}. For TEF $f$, the escaping set $I(f)$ was first studied by A. Eremenko \cite{ere}. He himself showed that 
 $I(f)\not= \emptyset$; the boundary of this set is a Julia set $ J(f) $ (that is, $ J(f) =\partial I(f) $);
 $I(f)\cap J(f)\not = \emptyset$; and 
 $\overline{I(f)}$ has no bounded component. By motivating from this last statement, he posed a question: \textit{Is every component of $ I(f) $ unbounded?}. This question is considered  as an important open problem of transcendental dynamics and it is known as \textit{Eremenko's conjecture}. Note that the complement of Julia set $ J(f) $ in complex plane $ \mathbb{C} $ is a \textit{Fatou set} $F(f)$.
 
 Recall that the set $CV(f) = \{w\in \mathbb{C}: w = f(z)\;\ \text{such that}\;\ f^{\prime}(z) = 0\} $  represents the set of \textit{critical values}. The set 
$AV(f)$ consisting of all  $w\in \mathbb{C}$ such that there exists a curve (asymptotic path) $\Gamma:[0, \infty) \to \mathbb{C}$ so that $\Gamma(t)\to\infty$ and $f(\Gamma(t))\to w$ as $t\to\infty$ is called the set of \textit{asymptotic values} of $ f $ and the set
$SV(f) =  \overline{(CV(f)\cup AV(f))}$
is called the \textit{singular values} of $ f $.  
If $SV(f)$ has only finitely many elements, then $f$ is said to be of \textit{finite type}. If $SV(f)$ is a bounded set, then $f$ is said to be of \textit{bounded type}.                                                                                                                             
The sets
$$\mathscr{S} = \{f:  f\;\  \textrm{is of finite type}\} 
\;\;  \text{and}\; \;                                                                                                                                                                                                                                                                                                                                                                                                                                                                                                                                                                                                                                                                                                                                                                                                                                                                                                                                                                                                                                                                                                                                                                                                                                                                                                                                                                                                                                                                                                                                                                                                                                                                                                                                                                                                                                                                                                                                                                                                                                                                                                                                                                                                                                                                                                                                                                                                                                                                                                                                                                                                                                                                                                                                                                                                                                                                                                                                                                                                                                                                                                                     
\mathscr{B} = \{f: f\;\  \textrm{is of bounded type}\}
$$
are respectively called \textit{Speiser class} and \textit{Eremenko-Lyubich class}. 
The \textit{post-singular point} is the point on the orbit of singular value. That is, if $z$ is a singular value of entire function $f$, then $f^n(z)$  is a post-singular point for $n\geq 0$. The set of all post-singular points is called \textit{post-singular set} and it is denoted by 
$$P(f) =\bigcup_{n\geq 0}f^n(SV( f))
$$ 
The entire function $f$ is called \textit{post-singularly bounded} if its post-singular set is bounded and it is called \textit{post-singularly finite} if its post-singular set is finite. A transcendental entire function $ f $ is \textit{hyperbolic} if the post singular set $ P(f) $ is compact subset of the Fatou set $ F(f) $. 

The main concern of this paper is to study of escaping set under transcendental semigroup. So we start our formal study from the notion of transcendental semigroup. Note that for given complex plane $\mathbb{C}$, the set $\text{Hol}(\mathbb{C})$  denotes a set of all holomorphic functions of $ \mathbb{C} $. If $ f\in \text{Hol}(\mathbb{C}) $, then $ f $ is a polynomial or transcendental entire function. The set $\text{Hol}(\mathbb{C})$ forms a semigroup  with semigroup operation being the functional composition. 
 
\begin{dfn}[\textbf{Transcendental semigroup}]
Let $ A = \{f_i: i\in \mathbb{N}\} \subset \text{Hol}(\mathbb{C})$ be a set of transcendental entire functions $ f_{i}: \mathbb{C}\rightarrow \mathbb{C} $. A \textit{transcendental semigroup} $S$ is a semigroup generated by the set $ A $ with semigroup operation being the functional composition. We denote this semigroup by $S = \langle f_{1}, f_{2}, f_{3}, \cdots, f_{n}, \cdots \rangle$. 
\end{dfn}
Here, each $f \in S$ is the transcendental entire function and $S$ is closed under functional composition. Thus $f \in S$ is constructed through the composition of finite number of functions $f_{i_k},\;  (k=1, 2, 3,\ldots, m) $. That is, $f =f_{i_1}\circ f_{i_2}\circ f_{i_3}\circ \cdots\circ f_{i_m}$. 

A semigroup generated by finitely many functions $f_i, (i = 1, 2, 3,\ldots, n) $  is called \textit{finitely generated transcendental semigroup}. We write $S= \langle f_1,f_2,\ldots,f_n\rangle$.
 If $S$ is generated by only one transcendental entire function $f$, then $S$ is \textit{cyclic or trivial  transcendental  semigroup}. We write $S = \langle f\rangle$. In this case each $g \in S$ can be written as $g = f^n$, where $f^n$ is the nth iterates of $f$ with itself. The transcendental semigroup $S$ is \textit{abelian} if  $f_i\circ f_j =f_j\circ f_i$  for all generators $f_{i}$ and $f_{j}$ of $ S $.

Based on the Fatou-Julia-Eremenko theory of a complex analytic function, the Fatou set, Julia set and escaping set in the settings of semigroup are defined as follows.
\begin{dfn}[\textbf{Fatou set, Julia set and escaping set}]\label{2ab} 
The set of normality or the Fatou set of the transcendental semigroup $S$ is defined by
  \[F (S) = \{z \in \mathbb{C}: S\;\ \textrm{is normal in a neighborhood of}\;\ z\}\] 
The \textit{Julia set} of $S$ is defined by $J(S) = \mathbb{C} \setminus F(S)$
 and the \textit{escaping set} of $S$ by 
        \[I(S) = \{z \in \mathbb{C}: \;  f^n(z)\rightarrow \infty \;\ \textrm{as} \;\ n \rightarrow \infty\;\   \textrm{for all}\;\ f \in S\}\]
We call each point of the set $  I(S) $ by \textit{escaping point}.        
\end{dfn} 
It is obvious that $F(S)$ is the largest open subset of  $ \mathbb{C} $  such that semigroup $ S $ is normal. Hence its compliment $J(S)$ is a smallest closed set for any transcendental semigroup $S$. Whereas the escaping set $ I(S) $ is neither an open nor a closed set (if it is non-empty) for any semigroup $S$.
        
If $S = \langle f\rangle$, then $F(S), J(S)$ and $I(S)$ are respectively the Fatou set, Julia set and escaping set in classical iteration theory of complex dynamics. In this situation we simply write: $F(f), J(f)$ and $I(f)$. 
For the existing results of Fatou Julia theory under transcendental semigroup, we refer \cite{kri, kum2, kum1, kum3, poo}. 
\section{Some Fundamental Features of Escaping Set}
 
The following immediate relation between $ I(S) $ and $ I(f) $ for any $ f \in S $ will be clear from the definition of escaping set.
\begin{theorem}\label{1c}
 $I(S) \subset I(f)$ for all $f \in S$  and hence  $I(S)\subset \bigcap_{f\in S}I(f)$.
\end{theorem}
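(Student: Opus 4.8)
The plan is to argue directly from the definition of the escaping set $I(S)$ given in Definition~\ref{2ab}, since the desired containment is essentially encoded in the universal quantifier appearing there. First I would fix an arbitrary $f \in S$ and take a point $z \in I(S)$. By the definition of $I(S)$, membership of $z$ means precisely that $g^{n}(z) \to \infty$ as $n \to \infty$ for every $g \in S$. Specializing this to the chosen element $g = f$ yields $f^{n}(z) \to \infty$ as $n \to \infty$, which is exactly the condition for $z$ to lie in $I(f)$. Since $z \in I(S)$ was arbitrary, this gives $I(S) \subset I(f)$.

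For the second assertion I would observe that the inclusion $I(S) \subset I(f)$ just obtained holds for every $f \in S$, with no dependence on the particular choice of $f$. Hence any $z \in I(S)$ lies in $I(f)$ for all $f \in S$ simultaneously, i.e. in the intersection $\bigcap_{f \in S} I(f)$. Taking the intersection over all $f \in S$ on the right therefore preserves the containment, and I conclude $I(S) \subset \bigcap_{f \in S} I(f)$.

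I expect no substantial obstacle here, as the result is an immediate unpacking of the definitions. The only point that requires care is the reading of the notation $f^{n}$ as the $n$-th iterate of the single semigroup element $f$, rather than as an arbitrary length-$n$ composition of distinct generators of $S$. With this reading the escaping condition defining $I(S)$ matches termwise the condition defining the classical escaping set $I(f)$, and once this alignment is noted both inclusions follow at once.
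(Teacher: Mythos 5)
Your proposal is correct and is essentially identical to the paper's own proof: both unpack the universal quantifier in the definition of $I(S)$, specialize to a fixed $f \in S$ to get $I(S) \subset I(f)$, and then pass to the intersection. Your added remark about reading $f^{n}$ as the iterate of a single element rather than a general word in the generators is a sensible clarification, but the argument itself matches the paper's.
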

\begin{proof}
Let $ z \in I(S) $, then $ f^{n}(z)\rightarrow \infty  $ as $ n \rightarrow \infty $ for all $ f \in S $. By which we mean   $ z \in I(f) $ for any  $ f \in S $. This immediately follows the second inclusion.
\end{proof}
Note that the above same type of relation (Theorem \ref{1c}) holds between $ F(S) $ and $ F(f) $. However opposite relation holds between the sets $ J(S) $ and $ J(f) $. Poon {\cite[Theorem 4.1, Theorem 4.2] {poo}} proved that the Julia set $ J(S) $ is perfect and $ J(S) = \overline{\bigcup_{f \in S} J(f)} $ for any transcendental semigroup $ S $. From the last relation of above theorem \ref{1c}, we can say that the escaping set may be empty. Note that $I(f)\not = \emptyset$  in classical iteration theory \cite{ere}. Dinesh Kumar and Sanjay Kumar {\cite [Theorem 2.5]{kum2}} have mentioned the following transcendental  semigroup $S$, where $I(S)$ is an empty set.
\begin{theorem}\label{e}
The transcendental entire semigroup $S = \langle f_{1},\;f_{2}\rangle$  generated by two functions $f_{1}$ and $ f_{2} $ from  respectively two parameter families $\{e^{-z+\gamma}+c\;  \text{where}\;  \gamma, c  \in \mathbb{C} \; \text{and}\;  Re(\gamma)<0, \; Re(c)\geq 1\}$ and $\{e^{z+\mu}+d, \; \text{where}\;  \mu, d\in \mathbb{C} \; \text{and}\; Re(\mu)<0,  \; Re(d)\leq -1\}$ of functions  has empty escaping set $I(S)$. \end{theorem}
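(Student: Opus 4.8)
The plan is to reduce the problem to the two generators by means of Theorem \ref{1c}, which gives $I(S)\subseteq I(f_1)\cap I(f_2)$, and then to show that \emph{no} point of $\mathbb{C}$ can escape to $\infty$ simultaneously under $f_1$ and $f_2$. The engine of the argument is the observation that each generator compresses a half-plane into a small disk that it maps into itself: the constraints $\mathrm{Re}(\gamma)<0$, $\mathrm{Re}(\mu)<0$ force the exponential terms to be strictly contracting on the appropriate half-plane, while $\mathrm{Re}(c)\geq 1$ and $\mathrm{Re}(d)\leq -1$ keep the target disk safely inside that half-plane. Orbits trapped in a bounded forward-invariant disk stay bounded, hence cannot tend to $\infty$.

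First I would analyze $f_1(z)=e^{-z+\gamma}+c$ on the closed right half-plane $H_1=\{z:\mathrm{Re}(z)\geq 0\}$. For $z\in H_1$ one has $|f_1(z)-c|=e^{-\mathrm{Re}(z)+\mathrm{Re}(\gamma)}\leq e^{\mathrm{Re}(\gamma)}=:r_1<1$, so $f_1(H_1)\subseteq \overline{D}(c,r_1)$, the closed disk of radius $r_1$ about $c$. Since the least real part attained on this disk is $\mathrm{Re}(c)-r_1\geq 1-r_1>0$, the disk lies inside $H_1$, and therefore $f_1(\overline{D}(c,r_1))\subseteq \overline{D}(c,r_1)$; that is, $\overline{D}(c,r_1)$ is forward invariant. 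Consequently every orbit of $f_1$ starting in $H_1$ enters and remains in the bounded set $\overline{D}(c,r_1)$ after one step, so it cannot converge to $\infty$, giving $I(f_1)\cap H_1=\emptyset$.

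Next I would run the mirror-image computation for $f_2(z)=e^{z+\mu}+d$ on the closed left half-plane $H_2=\{z:\mathrm{Re}(z)\leq 0\}$. For $z\in H_2$, $|f_2(z)-d|=e^{\mathrm{Re}(z)+\mathrm{Re}(\mu)}\leq e^{\mathrm{Re}(\mu)}=:r_2<1$, so $f_2(H_2)\subseteq \overline{D}(d,r_2)$; because the greatest real part on this disk is $\mathrm{Re}(d)+r_2\leq -1+r_2<0$, the disk sits inside $H_2$ and is again forward invariant. Hence orbits of $f_2$ beginning in $H_2$ stay bounded, so $I(f_2)\cap H_2=\emptyset$. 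Finally I would assemble the two facts by a case split: for any $z\in\mathbb{C}$, either $\mathrm{Re}(z)\geq 0$, whence $z\notin I(f_1)$, or $\mathrm{Re}(z)\leq 0$, whence $z\notin I(f_2)$; in either case $z\notin I(f_1)\cap I(f_2)$, and Theorem \ref{1c} then forces $z\notin I(S)$. As $z$ is arbitrary, $I(S)=\emptyset$.

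The step requiring the most care—and the only genuine obstacle—is the forward-invariance claim, i.e. verifying that the image disk remains within the half-plane so that the contraction bound propagates under iteration. This is precisely where the hypotheses are tuned: the radii are $<1$ exactly because $\mathrm{Re}(\gamma),\mathrm{Re}(\mu)<0$, and the disks clear the imaginary axis exactly because $\mathrm{Re}(c)\geq 1$ and $\mathrm{Re}(d)\leq -1$. It is worth emphasizing that neither generator alone suffices, since $f_1$ controls only the right half-plane and $f_2$ only the left; the conclusion relies on combining them through the inclusion $I(S)\subseteq I(f_1)\cap I(f_2)$.
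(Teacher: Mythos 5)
Your proof is correct. Note that the paper itself contains no proof of this statement---it is quoted from Kumar and Kumar \cite{kum2}---and your argument (each generator contracts a closed half-plane into a bounded forward-invariant disk that stays clear of the imaginary axis, so every point fails to lie in $I(f_1)\cap I(f_2)$, which contains $I(S)$ by Theorem \ref{1c}) is precisely the standard one, with the hypotheses $\mathrm{Re}(\gamma),\mathrm{Re}(\mu)<0$ and $\mathrm{Re}(c)\geq 1$, $\mathrm{Re}(d)\leq -1$ used correctly at the two places where they are needed.
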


In the case of non-empty escaping set $ I(S) $, Eremenko's  result \cite{ere},  $\partial I(f) = J(f)$ of classical transcendental dynamics can be generalized to semigroup settings. The following results is due to Dinesh Kumar and Sanjay Kumar {\cite [Lemma 4.2 and Theorem 4.3]{kum2}} which yield the generalized answer in semigroup settings.
\begin{theorem}\label{3}
Let $S$ be a transcendental entire semigroup such that $ I(S) \neq \emptyset $. Then
\begin{enumerate}
\item $int(I(S))\subset F(S)\;\ \text{and}\;\ ext(I(S))\subset F(S) $, where $int$ and $ext$ respectively denote the interior and exterior of $I(S)$.  
 \item $\partial I(S) = J(S)$, where $\partial I(S)$ denotes the boundary of $I(S)$. 
\end{enumerate}
\end{theorem}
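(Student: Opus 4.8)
The plan is to follow Eremenko's classical template for $\partial I(f)=J(f)$, but to arrange the work so that the ``cheap'' half is handled by the structural facts already available for semigroups --- Theorem~\ref{1c} and Poon's identity $J(S)=\overline{\bigcup_{f\in S}J(f)}$ --- and so that the genuinely new difficulty is isolated at the end. The whole theorem is equivalent to the single identity $F(S)=\operatorname{int}(I(S))\cup\operatorname{ext}(I(S))$: part (1) is the inclusion $\operatorname{int}(I(S))\cup\operatorname{ext}(I(S))\subseteq F(S)$, which upon taking complements reads $J(S)\subseteq\partial I(S)$, while the inclusion $\partial I(S)\subseteq J(S)$ of part (2) is the reverse set inclusion $F(S)\subseteq\operatorname{int}(I(S))\cup\operatorname{ext}(I(S))$. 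So I would prove these two inclusions separately.

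First I would settle the interior inclusion $\operatorname{int}(I(S))\subseteq F(S)$ cleanly. By Theorem~\ref{1c}, $I(S)\subseteq I(f)$ for every $f\in S$, so the open set $\operatorname{int}(I(S))$ is contained in $\operatorname{int}(I(f))$, and by Eremenko's result for a single function $\operatorname{int}(I(f))\subseteq F(f)=\mathbb{C}\setminus J(f)$. Hence $\operatorname{int}(I(S))$ is disjoint from $J(f)$ for every $f$, and therefore from $\bigcup_{f\in S}J(f)$. Since $\operatorname{int}(I(S))$ is open, its complement is closed, so it is in fact disjoint from the closure $\overline{\bigcup_{f}J(f)}=J(S)$ by Poon's identity; thus $\operatorname{int}(I(S))\subseteq F(S)$. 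I would deliberately postpone the exterior inclusion $\operatorname{ext}(I(S))\subseteq F(S)$, because once the main identity is known it is immediate: then $J(S)=\partial I(S)\subseteq\overline{I(S)}$ forces $J(S)\cap\operatorname{ext}(I(S))=\emptyset$.

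Next I would prove the dichotomy that each connected component $U$ of $F(S)$ lies either entirely inside $I(S)$ or entirely outside it; this gives $F(S)\subseteq\operatorname{int}(I(S))\cup\operatorname{ext}(I(S))$, i.e.\ $\partial I(S)\subseteq J(S)$. On such a $U$ the semigroup $S$ is normal, hence so is the subfamily $\{f^n:n\in\mathbb{N}\}\subseteq S$ for each fixed $f\in S$. If some $z_0\in U\cap I(S)$ exists, then $f^n(z_0)\to\infty$ by Theorem~\ref{1c}, so every locally uniform subsequential limit of $(f^n)$ on $U$ takes the value $\infty$ at $z_0$ and, being either holomorphic or identically $\infty$, must be $\equiv\infty$; hence $f^n\to\infty$ locally uniformly on $U$. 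As $f\in S$ was arbitrary, this places all of $U$ in $I(S)$. So a single escaping point in $U$ forces $U\subseteq I(S)$, which is the dichotomy.

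The hard part will be the remaining inclusion $J(S)\subseteq\partial I(S)$. The interior inclusion together with the dichotomy already gives $J(S)\cap\operatorname{int}(I(S))=\emptyset$, so what is left is $J(S)\cap\operatorname{ext}(I(S))=\emptyset$, equivalently $J(S)\subseteq\overline{I(S)}$: the common escaping set must be dense in $J(S)$. This is exactly where the single-function theory does not transfer verbatim. For one function Eremenko first produces a point of $I(f)\cap J(f)$ and then spreads it over all of $J(f)$ using the blowing-up property together with the complete invariance of $I(f)$. For the semigroup both ingredients are problematic: $I(S)$ can be far smaller than any $I(f)$, and it has no obvious backward invariance, since from $g(w)\in I(S)$ for some $g\in S$ one cannot conclude $w\in I(S)$ --- the orbits $h^n(w)$ under the other $h\in S$ are not controlled by $g$. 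My plan is therefore to first locate a common escaping point lying on $J(S)$, and then to run a semigroup version of the blowing-up argument: given $z_0\in J(S)$ and a neighbourhood $W$, use that forward images of $W$ under elements of $S$ eventually cover all but a small exceptional set to pull an escaping point back into $W$, taking care to select the preimage so that it escapes under \emph{every} element of $S$ simultaneously. Controlling all these orbits at once is the crux of the whole theorem. Once $J(S)\subseteq\overline{I(S)}$ is established, $\partial I(S)=J(S)$ follows, and finally $\operatorname{ext}(I(S))=\mathbb{C}\setminus\overline{I(S)}\subseteq\mathbb{C}\setminus J(S)=F(S)$ supplies the exterior half of part (1).
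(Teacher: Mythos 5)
This theorem is quoted in the paper from Kumar and Kumar \cite{kum2} and is not reproved there, so there is no in-paper argument to compare against; I am therefore assessing your proposal on its own terms. Your topological bookkeeping is correct: the theorem is equivalent to $F(S)=\operatorname{int}(I(S))\cup\operatorname{ext}(I(S))$, and you prove two of the three required inclusions soundly. The chain $\operatorname{int}(I(S))\subseteq\operatorname{int}(I(f))\subseteq F(f)$ for every $f\in S$, combined with the openness of $\operatorname{int}(I(S))$ and Poon's identity $J(S)=\overline{\bigcup_{f\in S}J(f)}$, correctly yields $\operatorname{int}(I(S))\subseteq F(S)$; and the normal-family dichotomy on a component $U$ of $F(S)$ (for each fixed $f\in S$, a spherical subsequential limit of $(f^n)$ on $U$ taking the value $\infty$ at one point must be $\equiv\infty$ on $U$, so one escaping point forces $U\subseteq I(f)$ for all $f$, hence $U\subseteq I(S)=\bigcap_{f\in S}I(f)$) correctly yields $F(S)\subseteq\operatorname{int}(I(S))\cup\operatorname{ext}(I(S))$, i.e.\ $\partial I(S)\subseteq J(S)$.

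However, the third inclusion, $J(S)\subseteq\overline{I(S)}$ --- which is exactly what is needed both for $\operatorname{ext}(I(S))\subseteq F(S)$ in part (1) and for $J(S)\subseteq\partial I(S)$ in part (2) --- is not proved; you only announce a plan for it. The obstruction you yourself name is real and is left unresolved: $I(S)$ carries no evident backward invariance, so the blowing-up property of a single $g\in S$ lets you pull a point $w$ with $g^n(w)\to\infty$ into any neighbourhood of a point of $J(S)$, but gives no control whatsoever over the orbits of $w$ under the other elements of $S$, hence produces no point of $I(S)$; and Eremenko's Wiman--Valiron construction of escaping points is likewise tied to one function at a time. As written, half of part (1) and one inclusion of part (2) remain unproven, so the proposal is incomplete at precisely its load-bearing step; to finish it you would need either a genuine simultaneous-escape construction or an additional structural hypothesis on $S$ that restores some form of invariance of $I(S)$.
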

This last statement is equivalent to $ J(S)\subset \overline{I(S)} $. 
If $ I(S) \neq \emptyset $, then we {\cite[Theorem 4.6]{sub1}} proved the following result which is a generalization of Eremenko's  result $I(f)\cap J(f) \neq \emptyset $ {\cite[Theorem 2]{ere}} of classical transcendental dynamics to holomorphic semigroup dynamics. 
\begin{theorem}\label{lu1}
Let $S$ be a transcendental semigroup such that $ F(S)$ has a multiply connected component. Then $I(S)\cap J(S) \neq \emptyset $
\end{theorem}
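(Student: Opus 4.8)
The plan is to locate an escaping point that simultaneously lies in $J(S)$, and the natural place to look is the boundary of a ``hole'' of the given multiply connected Fatou component. First I would fix a multiply connected component $U$ of $F(S)$ and choose a Jordan curve $\gamma \subset U$ that is not null-homotopic in $U$; such a curve exists precisely because $U$ is multiply connected. Let $D$ be the bounded component of $\mathbb{C}\setminus\gamma$ and let $H$ be the bounded complementary component of $U$ enclosed by $\gamma$, so that $\overline{H}\subset D$. Since $U$ is a Fatou component, its boundary lies in the Julia set, giving the inclusion $\partial H\subset\partial U\subset J(S)$ for free. Thus it suffices to prove that $\partial H\subset I(S)$.

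The second step is to show that $f^{n}\to\infty$ uniformly on $\gamma$ for every $f\in S$; this is the semigroup form of Baker's theorem that multiply connected Fatou components are escaping. On $U\subset F(f)$ the iterates $\{f^{n}\}$ form a normal family, so along any subsequence they converge locally uniformly either to $\infty$ or to a finite holomorphic limit $g$. A finite limit forces the image curves $f^{n}(\gamma)$ to contract, which is incompatible with $\gamma$ being non-contractible once one tracks the winding of $f^{n}(\gamma)$ about large circles; hence $\infty$ is the only possible limit and $m_{n}:=\min_{z\in\gamma}|f^{n}(z)|\to\infty$.

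With escape established on the surrounding loop, I would push it inward by the minimum modulus principle. For each $f\in S$, apply the open-mapping and minimum-modulus argument to $f^{n}$ on $\overline{D}$: where $f^{n}$ has no zeros in $D$ one gets $|f^{n}|\ge m_{n}\to\infty$ throughout $D$, and the finitely many zeros are absorbed by Baker's device, namely that $f^{n}(\gamma)$ winds around the disc $\{|w|<m_{n}\}$ and carries $\overline{H}$ into its unbounded exterior. This yields $f^{n}\to\infty$ on all of $\overline{H}$, so $\overline{H}\subset I(f)$ for every $f\in S$, and therefore $\overline{H}\subset I(S)$ by the definition of the escaping set. In particular $\partial H\subset I(S)$, and combined with $\partial H\subset J(S)$ we obtain $\partial H\subset I(S)\cap J(S)$; since $\partial H\neq\emptyset$ this proves $I(S)\cap J(S)\neq\emptyset$.

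The hard part will be the uniformity of the escape over the whole semigroup in the second and third steps: the argument silently assumes that no single generator ``fills'' the hole $H$, which would place $\gamma$ inside a simply connected $f$-component and destroy $\partial H\subset I(f)$. To rule this out I would use the forward invariance of $F(S)$ and the backward invariance of $J(S)$ under each $f\in S$, together with $J(S)=\overline{\bigcup_{f\in S}J(f)}$, to show that $\partial H$ cannot be swallowed by the Fatou set of any element of $S$, and then invoke the fast-escaping-set refinement of Baker's theorem uniformly across the generators. Controlling the zeros of $f^{n}$ inside $D$ is the only genuinely computational point, and everything else is bookkeeping with Theorem \ref{3}.
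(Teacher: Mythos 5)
The paper itself offers no proof of Theorem \ref{lu1}: it is quoted from \cite{sub1}, so there is no in-paper argument to compare against, and your proposal must be judged on its own terms. It contains a genuine gap, and it is precisely the one you flag at the end but do not repair. The escape claim in your second step --- that $f^{n}\to\infty$ on $\gamma$ for \emph{every} $f\in S$ --- requires $\gamma$ to be non-contractible in the Fatou set of that individual $f$, whereas the hypothesis only gives non-contractibility in the component $U$ of $F(S)$. Since $F(S)\subset F(f)$, the component $U_f$ of $F(f)$ containing $U$ may be strictly larger and simply connected; if $U_f$ is, say, an attracting basin containing $\overline{D}$, then $f^{n}$ converges to a finite limit uniformly on $\gamma$ and on $\overline{H}$, so $\overline{H}\cap I(f)=\emptyset$ and a fortiori $\overline{H}\cap I(S)=\emptyset$, destroying the whole strategy of locating the desired point on $\partial H$. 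Your proposed repair does not close this: from $\partial H\subset J(S)=\overline{\bigcup_{h\in S}J(h)}$ one only obtains that $J(h)$ meets $D$ for \emph{some} $h\in S$ (indeed for a sequence of elements whose Julia sets accumulate on $\partial H$), which by Baker's theorem yields $U\subset I(h)$ for those particular $h$, but says nothing about the remaining elements of $S$. A point of $J(S)$ need not belong to any single $J(g)$, and forward invariance of $F(S)$ together with complete invariance of each $J(g)$ is perfectly compatible with $D\subset F(g)$ for some particular $g\in S$. The quantifier ``for all $f\in S$'' in the definition of $I(S)$ is exactly where the argument breaks.

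The salvageable parts are the bookkeeping ones: $\partial H\subset\partial U\subset J(S)$ is correct, and in the one-generator case your outline is essentially the Baker/Rippon--Stallard argument that a multiply connected Fatou component $U$ of an entire function satisfies $\overline{U}\subset I(f)$, with the minimum-modulus and winding considerations handled as you describe. What is missing is a mechanism forcing \emph{every} element of the semigroup to see the hole $H$ as a hole of its own Fatou set --- equivalently, a proof that $J(g)\cap D\neq\emptyset$ for all $g\in S$ --- or else a different choice of candidate point altogether. Without such a lemma (or an additional hypothesis on $S$), the inclusion $\overline{H}\subset I(S)$, and hence the conclusion, is not established by your argument.
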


Eremenko and Lyubich \cite{ere1} proved that if transcendental function $ f\in \mathscr{B} $, then $ I(f)\subset J(f) $, and $ J(f) = \overline{I(f)} $. Dinesh Kumar and Sanjay Kumar {\cite [Theorem 4.5]{kum2}} generalized these results to a finitely generated transcendental semigroup of bounded type  as shown below.
\begin{theorem}\label{4}
For every finitely generated transcendental semigroup $ S= \langle f_1,  f_2,  \ldots,f_n\rangle $ in which each generator $f_i $ is of bounded type, then $ I(S)\subset J(S) $ and $ J(S) = \overline{I(S)} $. 
\end{theorem}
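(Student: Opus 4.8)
The plan is to prove the two assertions separately, reducing each to the single-function theorem of Eremenko and Lyubich together with the structural facts already recorded. A useful preliminary observation is that the class $\mathscr{B}$ is closed under composition: for $f,g\in\mathscr{B}$ one has the standard inclusion $SV(f\circ g)\subseteq SV(f)\cup f(SV(g))$, and since $SV(g)$ is compact and $f$ continuous, $f(SV(g))$ is bounded, so $SV(f\circ g)$ is bounded. By induction every $f\in S$, being a finite composition of the bounded-type generators, again lies in $\mathscr{B}$, so the Eremenko--Lyubich theorem applies to each individual $f\in S$, giving $I(f)\subset J(f)$ and $J(f)=\overline{I(f)}$.

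For the first inclusion $I(S)\subset J(S)$ I would argue pointwise. Fix a generator $f_1$. If $z\in I(S)$ then $z\in I(f_1)$ by Theorem \ref{1c}; since $f_1\in\mathscr{B}$, Eremenko--Lyubich gives $z\in J(f_1)$; and since $J(f_1)\subset\overline{\bigcup_{f\in S}J(f)}=J(S)$ by Poon's identity, we get $z\in J(S)$. Taking closures and using that $J(S)$ is closed, this also yields one of the two inclusions needed for the second claim, namely $\overline{I(S)}\subset J(S)$.

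The substance of the theorem is the reverse inclusion $J(S)\subset\overline{I(S)}$, that is, the density of the escaping set in the Julia set. Here I would invoke the boundary characterization of Theorem \ref{3}: provided $I(S)\neq\emptyset$, part (2) gives $\partial I(S)=J(S)$, and since $\partial I(S)\subset\overline{I(S)}$ always holds, this is exactly $J(S)\subset\overline{I(S)}$. Combined with the previous paragraph, $J(S)=\overline{I(S)}$.

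The main obstacle, and the hypothesis that must genuinely be secured, is the nonemptiness $I(S)\neq\emptyset$. It cannot be dispensed with: a bounded-type semigroup may have empty escaping set (both generators in Theorem \ref{e} are of bounded type, yet $I(S)=\emptyset$ there), so the equality $J(S)=\overline{I(S)}$ is to be read under the standing assumption $I(S)\neq\emptyset$, consistent with $J(S)$ being nonempty and perfect. The delicacy is thus not the passage to single functions --- the union identity only gives $J(S)=\overline{\bigcup_{f\in S}I(f)}$, whereas $I(S)\subseteq\bigcap_{f\in S}I(f)$ is a far thinner set --- but rather producing points that escape simultaneously under every $f\in S$ and showing they are dense in $J(S)$. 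In the present paper that nonemptiness is furnished by the hyperbolicity hypotheses developed afterward; granting it, Theorem \ref{3} upgrades it to the required density and the proof closes.
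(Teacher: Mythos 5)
Your proof follows essentially the same route as the paper's: Eremenko--Lyubich gives $I(f)\subset J(f)$ for bounded-type $f$, Theorem \ref{1c} together with Poon's identity $J(S)=\overline{\bigcup_{f\in S}J(f)}$ gives the chain $I(S)\subset I(f)\subset J(f)\subset J(S)$, and the equality $J(S)=\overline{I(S)}$ is obtained by combining this with the inclusion $J(S)\subset\overline{I(S)}$ coming from Theorem \ref{3}. You are more careful than the paper on two points worth stating explicitly: the closure of the class $\mathscr{B}$ under composition (needed if, as in the paper's wording, one applies Eremenko--Lyubich to every $f\in S$ rather than just to a single generator), and the fact that Theorem \ref{3} carries the hypothesis $I(S)\neq\emptyset$, which is absent from the statement of Theorem \ref{4} and is not implied by the bounded-type assumption, as Theorem \ref{e} demonstrates. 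That last observation identifies a genuine defect in the theorem as stated --- if $I(S)=\emptyset$ then $\overline{I(S)}=\emptyset$ while $J(S)$ is nonempty --- and your reading of the equality as holding only under the standing assumption $I(S)\neq\emptyset$ is the correct repair.
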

\begin{proof}
 Eremenko and Lyubich's  result \cite{ere1} shows that $ I(f) \subset J(f) $ for each $ f\in S $ of bounded type. Poon's result shows  {\cite[Theorem 4.2]{poo}} that $ J(S) = \overline {\bigcup_{f\in S}J(f)}$. Therefore, (from the definition of escaping set and theorem \ref{1c}) for every $ f\in S, \; I(S)\subset I(f)\subset  J(f)\subset J(S)$. 

The next part follows from the facts $ J(S)\subset\overline{I(S)} $ and $ I(S)\subset J(S) $.
\end{proof}

\section{Escaping set of Hyperbolic Semigroup}

The definitions of critical values, asymptotic values and singular values as well as post singularities of transcendental entire functions can be generalized to arbitrary setting of transcendental semigroups.
\begin{dfn}[\textbf{Critical point, critical value, asymptotic value and singular value}]
A point  $z\in \mathbb{C}$ is called \textit{critical point} of $S$ if it is critical point of some $g \in S$. A point $w\in \mathbb{C}$  is called a \textit{critical value} of $S$ if it is a critical value of some $g \in S$. A point $w \in \mathbb{C}$ is called an \textit{asymptotic value} of $S$ if it is an asymptotic value of some $g \in S$. A point  $w\in \mathbb{C}$  is called a \textit{singular value} of $S$ if it is a singular value of some $g \in S$. For a semigroup $ S $, if all $g \in S   $ belongs to $\mathscr{S}$ or $\mathscr{B}$, we call $ S $ a semigroup of class $\mathscr{S}$ or $\mathscr{B}$ (or finite or bounded type).
\end{dfn}
\begin{dfn}[\textbf{Post singularly bounded (or finite) semigroup}]
A transcendental semigroup $ S $ is said to be post-singularly bounded (or post-singularly finite) if each $g \in S$ is post-singularly bounded  (or post-singularly finite). Post singular set of post singularly bounded semi-group $ S $ is the set of the form
$$P(S) =\overline{\bigcup_{f\in S}f^n(SV( f))}
$$ 
\end{dfn}
\begin{dfn}[\textbf{Hyperbolic semigroup}]\label{1m}
An transcendental entire function $f$ is said to be \textit{hyperbolic} if the post-singular set $P(f)$ is a compact subset of $F(f)$. A transcendental  semigroup $S$ is said to be \textit{hyperbolic} if each $g\in S$ is hyperbolic (that is, if $ P(S)$ is a compact subset of  $F(S) $). 
\end{dfn}
 Note that if transcendental semigroup $ S $ is hyperbolic, then each $ f\in S$ is hyperbolic. However, the converse may not true. The fact $ P(f^{k}) = P(f) $ for all $ k \in \mathbb{N} $ shows that $ f^{k} $ is hyperbolic if $ f $ is hyperbolic. The following result has been shown by Dinesh Kumar and Sanjay Kumar {\cite [Theorem 3.16]{kum2}} where Eremenko's conjecture holds.
\begin{theorem}\label{2a}
Let $f \in \mathscr{B}$ periodic with period p and hyperbolic. Let $g =f^n+p, \; n \in  \mathbb{N}$. Then $S =\langle f, g\rangle$ is hyperbolic and all components of $I(S)$ are unbounded.
\end{theorem}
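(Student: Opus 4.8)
The plan is to exploit the periodicity of $f$ to collapse the whole semigroup dynamics onto the iteration of the single map $f$. First I would record the algebraic consequences of $f(z+p)=f(z)$: writing $T(z)=z+p$ we have $f\circ T=f$, hence $f^m\circ T=f^m$ and each $f^m$ is itself $p$-periodic for $m\ge 1$, and $g=T\circ f^n$. A short induction on word length then shows that every element of $S=\langle f,g\rangle$ has one of the two normal forms $f^a$ or $f^a+p=T\circ f^a$ with $a\ge 1$: indeed $f\circ f^a=f^{a+1}$, $g\circ f^a=f^{n+a}+p$, $f\circ(f^a+p)=f^{a+1}$ and $g\circ(f^a+p)=f^{n+a}+p$, where the last two use $f\circ T=f$ and $f^n\circ T=f^n$. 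Likewise $(T\circ f^a)^k=T\circ f^{ak}$, so the iterates of every element of $S$ are either $f^{ak}$ or $f^{ak}+p$.

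Next I would check the two hypotheses needed to invoke Section 2. Since $SV(f^n)\subset\bigcup_{j=0}^{n-1}f^{j}(SV(f))\subset P(f)$ and $P(f)$ is bounded (as $f$ is hyperbolic), the singular set $SV(g)=SV(f^n)+p$ is bounded, so $g\in\mathscr B$; thus both generators are of bounded type and Theorem \ref{4} applies. Using the normal forms together with the standard identity $I(f^a)=I(f)$, and the observation that $(T\circ f^a)^k(z)=f^{ak}(z)+p\to\infty$ iff $f^{ak}(z)\to\infty$, I would show that $z$ escapes under every element of $S$ precisely when $z\in I(f)$, that is, $I(S)=I(f)$ (in particular $I(S)\neq\emptyset$). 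Combining $I(S)=I(f)$ with $J(S)=\overline{I(S)}$ from Theorem \ref{4} and the Eremenko--Lyubich identity $J(f)=\overline{I(f)}$ \cite{ere1} yields $J(S)=J(f)$ and hence $F(S)=F(f)$.

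With these identifications, hyperbolicity of $S$ reduces to a boundedness-and-location check on $P(S)$. From the normal forms, every set $h^k(SV(h))$ with $h\in S$ lands in $P(f)\cup(P(f)+p)$: for $h=f^a$ one uses forward invariance $f(P(f))\subset P(f)$, and for $h=f^a+p$ one uses periodicity $f^{ak}(v+p)=f^{ak}(v)$ to push the orbit into $P(f)+p$. Hence $P(S)\subset P(f)\cup(P(f)+p)$ is compact. Finally, since each $f^m$ is $p$-periodic, $F(f)$ is invariant under $T$, so $P(f)+p\subset F(f)+p=F(f)=F(S)$ while $P(f)\subset F(f)=F(S)$; therefore $P(S)\subset F(S)$ and $S$ is hyperbolic.

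It remains to prove that every component of $I(S)=I(f)$ is unbounded, and this is the step I expect to be the real obstacle, the preceding work being essentially algebraic once $I(S)=I(f)$ is in hand. Here I would appeal to the structure theory of hyperbolic maps of bounded type: for such $f$ the escaping set is organized into unbounded curves (a Cantor bouquet, or system of dynamic rays), so that no component of $I(f)$ is bounded and Eremenko's conjecture holds for $f$; transporting this through $I(S)=I(f)$ gives the claim for $S$. The delicate point is to justify this ray/bouquet description in the generality asserted—hyperbolic and of bounded type—without invoking an a priori finite-order hypothesis, so I would isolate the required escaping-set structure theorem as the single external analytic input and make precise the class of $f$ to which it applies.
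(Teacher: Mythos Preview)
The paper does not supply its own proof of this statement; it merely quotes it as a result of Kumar and Kumar \cite[Theorem~3.16]{kum2} and moves on to the example and to Theorem~\ref{hs1}. So there is nothing in the paper to compare your argument against directly.

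That said, your approach is sound and is the natural one for this particular semigroup. The periodicity identity $f\circ T=f$ really does collapse $S$ to the normal forms $\{f^{a},\,T\circ f^{a}\}$, and from this the identifications $I(S)=I(f)$, $J(S)=J(f)$, $F(S)=F(f)$ and the containment $P(S)\subset P(f)\cup(P(f)+p)\subset F(f)$ follow as you indicate. Your caution about the last step is well placed: the assertion that every component of $I(f)$ is unbounded for hyperbolic $f\in\mathscr{B}$ is the single genuinely analytic input, and it does rest on the dynamic-ray\slash Cantor-bouquet structure theory for such maps.

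It is worth noting that the paper's own machinery (Lemmas~\ref{hs2}--\ref{hs4} and Theorem~\ref{hs1}) proceeds through commutativity of the generators, and this route does \emph{not} recover the present statement: here $f\circ g=f^{n+1}$ while $g\circ f=f^{n+1}+p$, so $S=\langle f,g\rangle$ is not abelian. Your periodicity-based reduction is therefore genuinely the right tool for this theorem rather than a mere alternative to the paper's method.
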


\begin{exm}
$ f(z) = e^{\lambda z} $ is hyperbolic entire function  for each $\lambda \in (0, \frac{1}{e}) $. The semigroup $ S = \langle f, g \rangle  $ where $g =f^m +p$, where $ p = \frac{2 \pi i}{\lambda} $, is hyperbolic transcendental semigroup.
\end{exm}

We have generalized the above theorem \ref{2a} to finitely generated hyperbolic semigroup with some modifications. This theorem will be the good source of non-empty escaping set transcendental semigroup from which the Eremenko's conjecture holds.
 
\begin{theorem}\label{hs1}
Let $ S =\langle f_{1},  f_{2}, \ldots, f_{n} \rangle$ is an abelian bounded type transcendental semigroup in which each $ f_{i} $ is hyperbolic for $ i =1, 2, \ldots, n $. Then semigroup $ S $ is hyperbolic and all components of $ I(S) $ are unbounded.
\end{theorem}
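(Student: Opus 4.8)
The plan is to split the argument along the two conclusions: first that $S$ is hyperbolic, and then that every component of $I(S)$ is unbounded. The one genuinely new feature compared with Theorem \ref{2a} is that $S$ is \emph{abelian}, so all generators commute; I would lean on this throughout to force the Fatou, Julia, and escaping sets of the generators to coincide.

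For hyperbolicity, by Definition \ref{1m} it suffices to show that each $g\in S$ is hyperbolic. First I would record the standard fact that commuting entire maps preserve one another's Fatou sets: if $f_i\circ f_j=f_j\circ f_i$ then $f_j(F(f_i))\subset F(f_i)$, since $f_i^n\circ f_j=f_j\circ f_i^n$ transfers normality. Iterating over the generators gives $F(f_i)=F(f_j)=F(S)$ for all $i,j$, and in particular $SV(f_i)\subset P(f_i)\subset F(S)$. Now fix $g=f_{i_1}\circ\cdots\circ f_{i_m}$. Commutativity yields $g^k=f_{i_1}^k\circ\cdots\circ f_{i_m}^k$, and the subadditivity $SV(\varphi\circ\psi)\subset SV(\varphi)\cup\varphi(SV(\psi))$ places $SV(g)$ inside a finite union of forward images (under elements of $S$) of the sets $SV(f_i)$, all contained in the compact set $\bigcup_i P(f_i)\subset F(S)$. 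Because each $f_i$ is hyperbolic, $F(f_i)$ is a union of attracting basins and the iterates $f_i^k$ converge, uniformly on compact subsets of $F(S)$, to the attracting cycles. Applying this along the factorization $g^k=\prod_j f_{i_j}^k$ shows that $\bigcup_k g^k(SV(g))$ is bounded and remains in $F(S)$, so $P(g)$ is compact in $F(g)=F(S)$. Hence $g$ is hyperbolic, and therefore so is $S$.

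For the second conclusion I would first invoke Theorem \ref{4}: since every generator is of bounded type, $I(S)\subset J(S)$ and $J(S)=\overline{I(S)}$. The strategy is to identify $I(S)$ with the common escaping set of the generators. One inclusion, $I(S)\subset\bigcap_i I(f_i)$, is Theorem \ref{1c}. For the reverse I would show $I(f_i)=I(f_j)$ and $\bigcap_i I(f_i)\subset I(S)$ using commutativity: for $z$ escaping under every generator, the factorization $g^k(z)=f_{i_1}^k(\cdots f_{i_m}^k(z)\cdots)$ together with the bounded-type growth estimates (via the logarithmic change of variable of Eremenko--Lyubich) shows that applying the outer iterates to an already escaping orbit preserves escape to $\infty$. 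This would give $I(S)=I(f_1)$. Finally I would invoke the known validity of Eremenko's conjecture for hyperbolic functions of bounded type: for such an $f_1$ every component of $I(f_1)$ is an unbounded set. Transporting this through the equality $I(S)=I(f_1)$ yields that every component of $I(S)$ is unbounded.

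The main obstacle is the reverse inclusion $\bigcap_i I(f_i)\subset I(S)$, equivalently $I(S)=I(f_i)$. The difficulty is that membership in $I(g)$ for a composite $g$ requires $g^k(z)\to\infty$, and while the innermost iterate already sends $z$ to $\infty$, one must rule out that an outer generator drags the orbit back toward a finite asymptotic value. This is precisely where both standing hypotheses enter: \emph{bounded type} confines all asymptotic values to a compact set, so in the escaping (logarithmic) coordinate each $f_i$ expands and carries large orbits to larger ones, while \emph{hyperbolicity} makes the expansion estimates uniform on the relevant tracts. Making this quantitative --- a uniform lower bound of the shape $|f_i(w)|\ge M|w|$ on the escaping region that is stable under composition --- is the technical heart of the argument; once it is established, both the equality $I(S)=I(f_1)$ and the unboundedness of the components of $I(S)$ follow.
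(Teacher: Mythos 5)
Your first half follows essentially the paper's route: the paper proves (Lemmas \ref{hs2}--\ref{hs4}) that for permutable hyperbolic $f,g$ one has $P(f\circ g)\subset P(f)\cup P(g)$, hence $f\circ g$ is hyperbolic, and then decomposes an arbitrary $f\in S$ as $f_1^{t_1}\circ\cdots\circ f_n^{t_n}$. However, the ``standard fact'' you lean on --- that $f_j(F(f_i))\subset F(f_i)$ because ``$f_i^n\circ f_j=f_j\circ f_i^n$ transfers normality'' --- is a genuine gap for \emph{transcendental} entire functions: if a subsequence $f_i^{n_k}\to\infty$ locally uniformly, you cannot conclude anything about $f_j\circ f_i^{n_k}$, since $f_j$ has an essential singularity at $\infty$. (This is exactly why the question of whether permutable transcendental entire functions share their Julia sets is a well-known open problem, settled only under extra hypotheses.) Your conclusion $F(f_i)=F(f_j)=F(S)$ is therefore unjustified as stated. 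Fortunately it is also unnecessary for the hyperbolicity claim: the containment $P(g)\subset\bigcup_i P(f_i)$, obtained from $SV(\varphi\circ\psi)\subset SV(\varphi)\cup\varphi(SV(\psi))$ together with the forward-invariance $f_i(SV(f_j))\subset SV(f_j)$ for commuting maps, is all that is needed, and that is the paper's argument.

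The second half is where the real gap lies. The paper disposes of the unboundedness of the components of $I(S)$ by citing an external result ({\cite[Theorem 3.3]{sub2}}), so it offers no argument to compare against; you instead attempt a direct proof via the chain $I(S)=\bigcap_i I(f_i)=I(f_1)$ followed by Rempe's theorem \cite{rem2} that every component of $I(f)$ is unbounded for hyperbolic $f\in\mathscr{B}$. The last step is a legitimate citation, but the bridge $\bigcap_i I(f_i)\subset I(S)$ (equivalently $I(S)=I(f_1)$) is precisely what you acknowledge you have not proved: you name the required uniform expansion estimate $|f_i(w)|\ge M|w|$ on the escaping region as ``the technical heart'' and leave it unestablished. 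Without that identification, knowing that components of $I(f_1)$ are unbounded says nothing about components of the a priori smaller set $I(S)$ --- a component of $I(S)$ could be a bounded piece of an unbounded component of $I(f_1)$. So as written the proposal establishes the hyperbolicity conclusion (modulo the repairable Fatou-set claim) but not the unboundedness conclusion.
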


\begin{lem}\label{hs2}
Let $ f $ and $ g $ be transcendental entire functions. Then 
$ SV(f \circ g) \subset SV(f) \cup f(SV(g)) $.
\end{lem}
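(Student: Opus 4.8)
The plan is to prove the inclusion by controlling the critical values and the asymptotic values of the composition $h=f\circ g$ separately, since by definition $SV(h)=\overline{CV(h)\cup AV(h)}$. I would first establish $CV(h)\cup AV(h)\subset SV(f)\cup f(SV(g))$ and only pass to closures at the very end. Throughout I would use the classical fact that an entire function restricts to an unbranched covering over any disk whose closure is disjoint from its set of singular values.

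For the critical values the argument is a direct chain-rule computation. Since $h'(z)=f'(g(z))\,g'(z)$, a point $z$ is a critical point of $h$ exactly when $g'(z)=0$ or $f'(g(z))=0$. In the first case $g(z)\in CV(g)$, so $h(z)=f(g(z))\in f(CV(g))\subset f(SV(g))$; in the second case $\zeta:=g(z)$ is a critical point of $f$, so $h(z)=f(\zeta)\in CV(f)\subset SV(f)$. Hence $CV(h)\subset CV(f)\cup f(CV(g))\subset SV(f)\cup f(SV(g))$.

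For the asymptotic values, let $w\in AV(h)$ with asymptotic path $\Gamma(t)\to\infty$ and $f(g(\Gamma(t)))\to w$, and set $\gamma:=g\circ\Gamma$, a continuous curve with $f(\gamma(t))\to w$. If $w\in SV(f)$ there is nothing to prove, so assume $w\notin SV(f)$ and choose $\rho>0$ with $\overline{D(w,\rho)}\cap SV(f)=\emptyset$, so that $f^{-1}(D(w,\rho))=\bigsqcup_j W_j$ with each $f|_{W_j}\colon W_j\to D(w,\rho)$ biholomorphic. A key subsidiary point is that each sheet $W_j$ is \emph{bounded}: an unbounded $W_j$ would, via the inverse branch $(f|_{W_j})^{-1}$, carry a curve escaping to infinity along which $f$ stays in $D(w,\rho)$ and approaches $\partial D(w,\rho)$, exhibiting a boundary point as an asymptotic value of $f$, contradicting the choice of $\rho$. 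Now because $f(\gamma(t))\to w$, the curve $\gamma$ eventually lies in the open set $\bigsqcup_j W_j$; being connected, $\gamma$ is trapped in a single bounded sheet $W_{j_0}$, and continuity of $(f|_{W_{j_0}})^{-1}$ forces $\gamma(t)\to b_{j_0}:=(f|_{W_{j_0}})^{-1}(w)\in\mathbb{C}$. Since $\Gamma(t)\to\infty$ while $g(\Gamma(t))=\gamma(t)\to b_{j_0}$, the finite value $b_{j_0}$ is an asymptotic value of $g$, so $w=f(b_{j_0})\in f(AV(g))\subset f(SV(g))$. Thus $AV(h)\subset SV(f)\cup f(SV(g))$ as well.

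The step I expect to be the main obstacle is the final passage to the closure. The two computations give $CV(h)\cup AV(h)\subset SV(f)\cup f(SV(g))$ cleanly, but $SV(h)$ is the \emph{closure} of the left-hand side, while $f(SV(g))$ need not be closed when $SV(g)$ is unbounded, since the continuous image of an unbounded closed set may fail to be closed. In full generality one therefore only obtains $SV(h)\subset\overline{SV(f)\cup f(SV(g))}$, the delicate points being limit values $a=\lim_k f(s_k)$ with $s_k\in SV(g)$ and $s_k\to\infty$. In the setting relevant to this paper the difficulty disappears, and I would make this explicit: for $g$ of bounded type $SV(g)$ is closed and bounded, hence compact, so $f(SV(g))$ is compact and $SV(f)\cup f(SV(g))$ is already closed, whence the stated inclusion follows verbatim after taking closures.
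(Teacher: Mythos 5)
Your proof is correct, but it takes a genuinely different route from the paper: the paper gives no argument at all for this lemma, simply citing Lemma~2 of Bergweiler and Wang, whereas you reconstruct the underlying proof from scratch. The critical-value half is the expected chain-rule computation, and the asymptotic-value half --- trap the tail of $\gamma=g\circ\Gamma$ in a single component $W_{j_0}$ of $f^{-1}(D(w,\rho))$ and use continuity of the inverse branch at the interior point $w$ to force $\gamma(t)\to(f|_{W_{j_0}})^{-1}(w)$, which is then a finite asymptotic value of $g$ with $w=f\bigl((f|_{W_{j_0}})^{-1}(w)\bigr)$ --- is exactly the covering-space argument behind the cited lemma. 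Two remarks. First, the boundedness of the sheets $W_j$ is never actually needed: continuity of $(f|_{W_{j_0}})^{-1}$ at the interior point $w$ already yields the convergence of $\gamma$, and this is fortunate because your justification of boundedness is the one shaky step in the write-up (a curve to infinity in an unbounded sheet only produces a connected cluster set contained in $\partial D(w,\rho)$, not automatically a single limit point, so no asymptotic value of $f$ is exhibited without further work); you can simply delete that claim. Second, your closing observation about closures is a genuine gain over the paper: since $SV$ is defined as a closure, the argument literally establishes $SV(f\circ g)\subset SV(f)\cup\overline{f(SV(g))}$, and the unqualified inclusion requires $f(SV(g))$ to be closed --- automatic when $SV(g)$ is compact, as in the bounded-type setting where the paper applies the lemma, but not in general. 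The paper, leaning entirely on the citation, never addresses this point.
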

\begin{proof}
See for instance {\cite[Lemma 2]{ber}}.
\end{proof}
\begin{lem}\label{hs3}
Let $ f $ and $ g $ are permutable transcendental entire functions. Then $ f^{m}(SV(g)) \subset SV(g) $ and $g^{m}(SV(f))\subset SV(f) $ for all $ m \in \mathbb{N} $.
\end{lem}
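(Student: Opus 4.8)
The plan is to reduce the statement to the case $m=1$ and then to prove the single invariance $\phi(SV(\psi))\subseteq SV(\psi)$ for an arbitrary permutable pair $\phi,\psi$ of transcendental entire functions. The reduction is immediate: if $f\circ g=g\circ f$ then $f^{m}\circ g=g\circ f^{m}$, so $f^{m}$ and $g$ are again permutable transcendental entire functions; applying the pair statement to $(\phi,\psi)=(f^{m},g)$ yields $f^{m}(SV(g))\subseteq SV(g)$, and the roles of $f$ and $g$ exchanged give the second assertion. Thus everything rests on the $m=1$ invariance.

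First I would establish the auxiliary inclusion $\phi(SV(\psi))\subseteq SV(\phi\circ\psi)$, treating critical and asymptotic values separately. If $w\in CV(\psi)$, say $w=\psi(c)$ with $\psi'(c)=0$, then the chain rule gives $(\phi\circ\psi)'(c)=\phi'(\psi(c))\,\psi'(c)=0$, so $c$ is a critical point of $\phi\circ\psi$ and $\phi(w)=(\phi\circ\psi)(c)\in CV(\phi\circ\psi)$. If $w\in AV(\psi)$ along an asymptotic path $\Gamma(t)\to\infty$ with $\psi(\Gamma(t))\to w$, then by continuity of $\phi$ we get $(\phi\circ\psi)(\Gamma(t))\to\phi(w)$ along the same path, so $\phi(w)\in AV(\phi\circ\psi)$. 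Since $SV(\phi\circ\psi)$ is closed and $\phi$ is continuous, taking closures yields $\phi(SV(\psi))\subseteq\overline{\phi\bigl(CV(\psi)\cup AV(\psi)\bigr)}\subseteq SV(\phi\circ\psi)$.

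Next I would bring in permutability together with Lemma \ref{hs2}. Because $\phi\circ\psi=\psi\circ\phi$, Lemma \ref{hs2} applied to $\psi\circ\phi$ gives $SV(\phi\circ\psi)=SV(\psi\circ\phi)\subseteq SV(\psi)\cup\psi(SV(\phi))$. Combining this with the previous step produces $\phi(SV(\psi))\subseteq SV(\psi)\cup\psi(SV(\phi))$, and the symmetric computation gives $\psi(SV(\phi))\subseteq SV(\phi)\cup\phi(SV(\psi))$.

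The main obstacle is the residual cross term $\psi(SV(\phi))$: to reach the clean conclusion $\phi(SV(\psi))\subseteq SV(\psi)$ one must show that this term is already absorbed into $SV(\psi)$. I do not expect this to follow from permutability alone, since the union $SV(\phi)\cup SV(\psi)$ is not in general forward invariant under either map; the extra leverage must come from the ambient hypotheses of Theorem \ref{hs1}, namely that the generators are commuting, of bounded type, and hyperbolic, so that their singular values are organised within a common compact postsingular set lying in $F(S)$. The careful bookkeeping needed to discharge this absorption, rather than the elementary chain-rule and continuity arguments of the auxiliary inclusion, is where I expect the real difficulty to lie; the asymptotic-value case, which relies on the closure argument above, is the one secondary technical point to handle with care.
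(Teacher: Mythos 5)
Your reduction to $m=1$ is valid (and arguably cleaner than the paper's induction, which instead iterates $f^{k+1}(SV(g))=f(f^{k}(SV(g)))\subset f(SV(g))\subset SV(g)$), and your auxiliary inclusion $\phi(SV(\psi))\subset SV(\phi\circ\psi)$ is correct. But the proposal stops exactly where the lemma has to be proved: you never obtain $f(SV(g))\subset SV(g)$, and the route you chose --- through Lemma \ref{hs3}'s neighbour, Lemma \ref{hs2} --- cannot deliver it, since that lemma only gives $SV(\phi\circ\psi)\subset SV(\psi)\cup\psi(SV(\phi))$ and leaves precisely the cross term you flag. The paper does not invoke Lemma \ref{hs2} here at all. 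It argues directly on the two kinds of singular values, using permutability to re-express the image point as a value of $g$ itself. For a critical value $z=g(u)$ with $g'(u)=0$, write $w=f(z)=f(g(u))=g(f(u))$ and differentiate the identity $f\circ g=g\circ f$ at $u$ to get $g'(f(u))f'(u)=f'(g(u))g'(u)=0$; when $g'(f(u))=0$ this exhibits $f(u)$ as a critical point of $g$ and hence $w=g(f(u))$ as a critical value of $g$. For an asymptotic value $z$ with path $\gamma$ and $g(\gamma(t))\to z$, compose with $f$ and use permutability to write $f(g(\gamma(t)))=g(f(\gamma(t)))\to f(z)=w$, so that $f\circ\gamma$ serves as an asymptotic path for $g$ with value $w$. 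That is the missing idea: permutability is applied at the level of the individual critical point or asymptotic path, not at the level of the set-inclusion of Lemma \ref{hs2}.

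Your suspicion that permutability alone leaves loose ends is, however, not baseless, because the paper's own argument is not airtight either: the dichotomy $f'(u)=0$ or $g'(f(u))=0$ does not dispose of the case where only $f'(u)=0$; the asymptotic-value step tacitly assumes $f(\gamma(t))\to\infty$, which can fail if $\gamma$ escapes along a direction where $f$ has a finite asymptotic value; and the closure in $SV(g)=\overline{CV(g)\cup AV(g)}$ is never addressed (continuity of $f$ does handle that last point). So the gap in your write-up is real --- you do not prove the statement, and your chosen machinery cannot --- but the direct pointwise argument above is the one you should have attempted, and carrying it out carefully is also where one discovers which additional hypotheses, if any, are genuinely required.
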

\begin{proof}
We first prove that $ f(SV(g)) \subset SV(g) $. Then we use induction to prove $ f^{m}(SV(g)) \subset SV(g) $. 
 
Let $ w \in f(SV(g)) $. Then $ w = f(z) $ for some $ z \in SV(g) $. In this case, $ z $ is either critical value or an asymptotic value of function $ g $. 

First suppose that $ z $ is a critical value of $ g $. Then $ z = g(u) $ with $ g^{'}(u) =0 $. Since $ f $ and $ g $  are permutable functions, so $ w = f(z) = f(g(u))= (f\circ g)(u) = (g \circ f)(u) $. Also, $ (f\circ g)^{'}(u) = f^{'}(g(u)) g^{'}(u) =0 $. This shows that $ u $ is a critical point of $ f \circ g =g \circ f $ and $ w $ is a critical value of $ f \circ g =g \circ f $. By permutability of $ f $ and $ g $,  we can write  $f^{'}(g(u)) g^{'}(u) = g^{'}(f(u)) f^{'}(u) =0$ for any critical point $ u $ of $ f \circ g $. Since $ g^{'}(u) =0 $, then either $ f^{'}(u) =0 \Rightarrow  u$ is a critical point of $ f $ or $ g^{'}(f(u)) =0 \Rightarrow f(u) $ is a critical point of $ g$. This shows that $ w = g(f(u)) $ is a critical value of $ g $. Therefore,  $w\in SV(g)$. 

Next, suppose that $ z $ is an asymptotic value of function $ g $. We have to prove that $ w = f(z) $ is also asymptotic value of $ g $. Then there exists a curve $ \gamma: [0, \infty) \to \mathbb{C} $ such that $ \gamma (t) \to \infty $ and $  g(\gamma (t)) \to z $. So,  $  f(g(\gamma (t))) \to f(z) =w $ as $ t \to \infty $ along $ \gamma $. Since $ f\circ g = g \circ f $, so    $ f(g(\gamma (t))) \to f(z) =w\Rightarrow  g(f(\gamma (t))) \to f(z) =w$ as $ t \to \infty $ along $ \gamma $. This shows $ w $ is an asymptotic value of $ g $. This proves our assertion.

Assume that $ f^{k}(SV(g)) \subset SV(g) $ for some $ k \in \mathbb{N} $ with $ k \leq m $. Then 
$$ 
f^{k +1}(SV(g)) =f(f^{k}(SV(g))) \subset f(SV(g)) \subset SV(g) 
$$
Therefore, by induction, for all $ m \in \mathbb{N} $, we must have $ f^{m}(SV(g)) \subset SV(g) $.
The next part $g^{m}(SV(f))\subset SV(f) $ can be proved similarly as above.

\end{proof}
\begin{lem}\label{hs4}
Let $ f $ and $ g $ are two permutable  hyperbolic  transcendental entire functions. Then their composite $ f\circ g $ is also hyperbolic.
\end{lem}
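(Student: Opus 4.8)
The plan is to verify directly the two defining conditions of hyperbolicity (Definition \ref{1m}) for the single map $h = f\circ g$: that its post-singular set $P(h)$ is compact, and that $P(h)\subset F(h)$. The backbone of the whole argument is the commutation identity $(f\circ g)^n = f^n\circ g^n$, which holds because $f$ and $g$ are permutable.

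First I would control the post-singular set. Starting from Lemma \ref{hs2}, $SV(h)\subset SV(f)\cup f(SV(g))$, I would push this forward under $h^n = f^n\circ g^n$ and repeatedly invoke the invariances of Lemma \ref{hs3} together with commutativity. On the $SV(f)$-part one has $g^n(SV(f))\subset SV(f)$, so $f^n(g^n(SV(f)))\subset f^n(SV(f))\subset P(f)$; on the $f(SV(g))$-part, sliding $f$ through $g^n$ via $g^n\circ f = f\circ g^n$ turns the term into $f^{n+1}(g^n(SV(g)))$, and since $f(SV(g))\subset SV(g)$ forces $f$ to map $P(g)$ into itself, this lands in $P(g)$. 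Hence $P(h)\subset P(f)\cup P(g)$. As $f$ and $g$ are hyperbolic, $P(f)$ and $P(g)$ are compact, so $P(f)\cup P(g)$ is compact, and therefore $\overline{P(h)}\subset P(f)\cup P(g)$, giving the required compactness.

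The second and harder part is to show $P(h)\subset F(h)$; by the inclusion just obtained it suffices to prove $P(f)\cup P(g)\subset F(f\circ g)$. Here I would lean on two facts about permutable maps. The first is the forward invariance of Fatou sets under a commuting map, $g(F(f))\subset F(f)$ and $f(F(g))\subset F(g)$, which (via commutativity and Lemma \ref{hs3}) also shows that each of $P(f)$ and $P(g)$ is invariant under both $f$ and $g$, hence under $h$. The second is the relation between the Fatou sets of $f$, $g$ and $f\circ g$: since $f$ commutes with $f\circ g$ and $g$ commutes with $f\circ g$, I expect $F(f)=F(g)=F(f\circ g)$ for permutable maps (equivalently $J(f)=J(g)=J(f\circ g)$). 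Granting this, $P(f)\cup P(g)\subset F(f)\cup F(g)=F(f)=F(h)$, which finishes the proof.

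The main obstacle is exactly this last ingredient: rigorously tying $F(f\circ g)$ to $F(f)$ and $F(g)$, and in particular establishing $P(f)\subset F(g)$ and $P(g)\subset F(f)$ so that both post-singular sets sit in the common Fatou set. If the blanket equality $F(f)=F(g)$ for permutable functions is not available to cite, I would instead aim for the weaker inclusion $F(f)\cap F(g)\subset F(f\circ g)$ by a direct normality argument on $\{f^n\circ g^n\}$: extract locally uniform limits of a subsequence $\{g^{n_k}\}$ on a neighborhood contained in $F(g)$ and feed them into the normal family $\{f^n\}$ on a neighborhood of $P(f)\subset F(f)$. The compactness and $f,g$-invariance of $P(f)$, together with the positive distance $\mathrm{dist}(P(f),J(f))>0$ afforded by hyperbolicity, are what I expect to make such a normality argument close; pinning down $P(f)\subset F(g)$ is the step that will require the most care.
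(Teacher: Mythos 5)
Your first half --- the inclusion $P(f\circ g)\subset P(f)\cup P(g)$ obtained from Lemma \ref{hs2}, the identity $(f\circ g)^m=f^m\circ g^m$, and the invariances of Lemma \ref{hs3} --- is exactly the computation in the paper's proof, and it is fine; it yields compactness of $P(f\circ g)$. The genuine gap is the one you yourself flag at the end: placing $P(f)\cup P(g)$ inside $F(f\circ g)$. Neither of your proposed routes closes it. The blanket statement $F(f)=F(g)=F(f\circ g)$ for permutable transcendental entire functions is not a citable fact: it is the transcendental analogue of the Julia--Fatou theorem on permutable rational maps and is a well-known open problem, with only partial results under extra hypotheses. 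Your fallback inclusion $F(f)\cap F(g)\subset F(f\circ g)$ is also not automatic --- normality of $\{f^n\}$ and of $\{g^n\}$ near a point does not by itself control $\{f^n\circ g^n\}$, since the images $g^n(U)$ may accumulate on $J(f)$ --- and the step you correctly identify as the crux, $P(f)\subset F(g)$ (and symmetrically $P(g)\subset F(f)$), is left entirely open. So as written the proposal proves that $P(f\circ g)$ is contained in the compact set $P(f)\cup P(g)$, but not that it is contained in $F(f\circ g)$.

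You should know, however, that the paper's own proof has the same hole, and arguably a worse one: it cites $F(f\circ g)\subset F(f)\cap F(g)$ and then asserts that it therefore suffices to show $P(f\circ g)$ is a compact subset of $F(f)\cup F(g)$. That inclusion runs in the wrong direction for this purpose --- a subset of the larger set $F(f)\cup F(g)$ need not lie in the smaller set $F(f\circ g)$ --- so the paper also only establishes $P(f\circ g)\subset P(f)\cup P(g)\subset F(f)\cup F(g)$ and never reaches $P(f\circ g)\subset F(f\circ g)$. In that sense your write-up is the more honest of the two: it names the missing ingredient (a permutability theorem relating $F(f)$, $F(g)$ and $F(f\circ g)$, or direct proofs that $P(f)\subset F(g)$ and $P(g)\subset F(f)$) rather than eliding it.
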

\begin{proof}
We have to prove that $ P(f \circ g) $ is a compact subset of  Fatou set $ F(f \circ g) $. From {\cite[Lemma 3.2]{kum4}}, $ F(f \circ g) \subset F(f) \cap F(g) $. This shows that $ F(f \circ g) $ is a subset of  $ F(f) $ and $ F(g) $. So this lemma will be proved if we prove $ P(f \circ g) $ is a compact subset of $ F(f) \cup F(g) $. By the definition of post singular set of transcendental entire function, we can write
\begin{align*}
P(f \circ g) & =\overline{\bigcup_{m\geq 0}(f \circ g)^m(SV(f \circ g))}\\
                 & = \overline{\bigcup_{m\geq 0} f^{m}(g^{m}(SV(f \circ g)))}\;\;\;\;\;\;\;\;\;\;\;\;\;\; (\text{by using permutabilty of $f$ and $g$}) \\
                 & \subset \overline{\bigcup_{m\geq 0} f^{m}(g^{m}(SV(f) \cup f(SV(g)))} \;\;\;\;\;\;\;\;\;\; (\text{by above lemma \ref{hs2}})\\
                 & = \overline{\bigcup_{m\geq 0} f^{m}(g^{m}(SV(f))) \cup g^{m}(f^{m + 1}(SV(g)))} \\
                 & \subset\overline{\bigcup_{m\geq 0} f^{m}(SV(f)))} \cup \overline{\bigcup_{m\geq 0} g^{m}(SV(g)))}\;\;\;\;(\text{by above lemma \ref{hs3}})\\  
                 & = P(f) \cup P(g)
\end{align*}
Since $ f $ and $ g $ are hyperbolic, so $ P(f) $ and $ P(g) $ are compact subset of $ F(f) $ and $ F(g) $. Therefore, the set $P(f) \cup P(g)  $ must be compact subset of $ F(f) \cup F(g) $.
\end{proof}
\begin{proof}[Proof of the Theorem \ref{hs1}]
Any $ f \in S $ can be written as $ f = f_{i_1}\circ f_{i_2}\circ f_{i_3}\circ \cdots\circ f_{i_m}$. By permutability of each $ f_{i} $, we can rearrange $ f_{i_{j}} $ and ultimately represented by 
$$
f = f_{1}^{t_{1}} \circ f_{2}^{t_{2}} \circ \ldots \circ f_{n}^{t_{n}}
$$
where each $ t_{k}\geq 0 $ is an integer for $ k = 1, 2, \ldots, n $. The lemma \ref{hs4} can be applied repeatably to show each of $f_{1}^{t_{1}}, f_{2}^{t_{2}},\ldots, f_{n}^{t_{n}}  $ is hyperbolic. Again by repeated application of above same lemma, we can say that $f = f_{1}^{t_{1}} \circ f_{2}^{t_{2}} \circ \ldots \circ f_{n}^{t_{n}}$ is itself hyperbolic  and so the semigroup $ S $ is hyperbolic. Next part follows from {\cite[Theorem 3.3]{sub2}} by the assumption of this theorem.
\end{proof}
\textbf{Acknowledgment}: We express our heart full thanks to Prof. Shunshuke Morosawa, Kochi University, Japan for his thorough reading of this paper with valuable suggestions and comments.

\end{document}